\newtheorem*{thmA}{Theorem~A}
\newtheorem*{thmB'}{Theorem~B$^\prime$}
\newtheorem*{thmB''}{Theorem~B$^{\prime\prime}$}
\newtheorem*{thmB'''}{Theorem~B$^{\prime\prime\prime}$}
\newtheorem*{thmB}{Theorem~B}
\newtheorem*{thm3'}{\ref{3}$^\prime$.~Theorem}
\newtheorem*{6C}{\ref{6}C.~Theorem}
\newtheorem*{lem9A}{\ref{9}A.~Lemma}
\newtheorem*{thm9B}{\ref{9}B.~Theorem}
\newtheorem{thm}{Theorem}
\newtheorem{lem}[thm]{Lemma}
\newtheorem*{lem*}{Lemma}
\newtheorem*{10A}{\ref{10}A.~Lemma}
\newtheorem*{10C}{\ref{10}C.~Theorem}
\newtheorem{cor}[thm]{Corollary}
\theoremstyle{definition}
\newtheorem{se}[thm]{}
\theoremstyle{definition}
\newtheorem*{6A}{\ref{6}A}
\newtheorem*{6B}{\ref{6}B}
\newtheorem*{10B}{\ref{10}B}
\journal{Topology and its Applications (submitted 8 Feb 2024, accepted 18 Apr 2024)}
\begin{document}

\begin{frontmatter}
\title{Characterizations of open and semi-open maps of compact Hausdorff spaces by induced maps}

\author{Xiongping Dai}
\ead{xpdai@nju.edu.cn}
\author{Yuxuan Xie}
\ead{201501005@smail.nju.edu.cn}
\address{Department of Mathematics, Nanjing University, Nanjing 210093, People's Republic of China}

\begin{abstract}
Let $f\colon X\rightarrow Y$ be a continuous surjection of compact Hausdorff spaces. By
\begin{enumerate}
\item[] $f_*\colon\mathfrak{M}(X)\rightarrow\mathfrak{M}(Y),\ \mu\mapsto \mu\circ f^{-1}$\quad and\quad $2^f\colon2^X\rightarrow2^Y,\ A\mapsto f[A]$
\end{enumerate}
we denote the induced continuous surjections on the probability measure spaces and hyperspaces, respectively. In this paper we mainly show the following facts:
\begin{enumerate}[(1)]
\item If $f_*$ is semi-open, then $f$ is semi-open.
\item If $f$ is semi-open densely open, then $f_*$ is semi-open densely open.
\item $f$ is open iff $2^f$ is open.
\item $f$ is semi-open iff $2^f$ is semi-open.
\item $f$ is irreducible iff $2^f$ is irreducible.
\end{enumerate}
\end{abstract}

\begin{keyword}
Hyperspace, (densely, semi-)open map, irreducible/induced map, minimal flow

\medskip
\MSC[2010] 37B05, 54B20
\end{keyword}
\end{frontmatter}

Let $f\colon X\rightarrow Y$ be a continuous map from a topological space $X$ onto another $Y$.
As usual, $f$ is \textit{open} iff the image of every open subset of $X$ is open in $Y$; $f$ is \textit{semi-open}, or \textit{almost open}, iff for every non-empty open subset $U$ of $X$, the interior of $f[U]$, denoted $\mathrm{int}\,f[U]$, is non-empty in $Y$.
The ``open'' and ``semi-open'' properties are important for the structure theory of compact minimal dynamics\,(see, e.g., \cite{V77, Wo, G07, G19}).

By $\mathfrak{M}(X)$, it means the set of all regular Borel probability measures on $X$ equipped with the weak-$*$ topology.
Then, there exists a naturally induced continuous surjective map:
\begin{enumerate}
\item[] $f_*\colon\mathfrak{M}(X)\rightarrow\mathfrak{M}(Y)$,\quad $\mu\mapsto \mu\circ f^{-1}$.
\end{enumerate}
First of all there are equivalent descriptions of openness and semi-openness of $f$ with the help of the induced map of $\mathfrak{M}(X)$ to $\mathfrak{M}(Y)$ as follows:

\begin{thmA}[{Ditor-Eifler~\cite{DE}}]
Let $f\colon X\rightarrow Y$ be a continuous surjection of compact Hausdorff spaces. Then $f$ is open iff the induced map $f_*\colon\mathfrak{M}(X)\rightarrow\mathfrak{M}(Y)$ is an open surjection.
\end{thmA}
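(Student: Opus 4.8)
The plan is to prove the two implications separately, the common tool being the Dirac embedding $\delta_X\colon X\hookrightarrow\mathfrak{M}(X)$, $x\mapsto\delta_x$, which is a homeomorphism onto the (closed) set of extreme points and satisfies the intertwining $f_*\circ\delta_X=\delta_Y\circ f$, i.e.\ $f_*\delta_x=\delta_{f(x)}$. Since the excerpt already records that $f_*$ is a continuous surjection, only the equivalence of \emph{openness} has to be established.

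For the implication ``$f_*$ open $\Rightarrow f$ open'' I would argue directly at Dirac measures. Let $U\subseteq X$ be nonempty open and $x\in U$; choose $\phi\in C(X)$ with $0\le\phi\le1$, $\phi(x)=1$ and $\mathrm{supp}\,\phi\subseteq U$, and put $V=\{\mu\colon\int\phi\,d\mu>\frac12\}$, an open neighbourhood of $\delta_x$. By hypothesis $f_*[V]$ is an open neighbourhood of $\delta_{f(x)}$. If $\delta_y\in f_*[V]$, pick $\mu\in V$ with $f_*\mu=\delta_y$; then $\mu(f^{-1}(y))=1$, while $\int\phi\,d\mu\le\mu(U)$ gives $\mu(U)>\frac12$, so $U\cap f^{-1}(y)\neq\emptyset$, i.e.\ $y\in f[U]$. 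Thus $f_*[V]\cap\delta_Y[Y]$ is a relatively open subset of $\delta_Y[Y]$ contained in $\delta_Y[f[U]]$; pulling back by the homeomorphism $\delta_Y$ yields an open neighbourhood of $f(x)$ inside $f[U]$, so $f[U]$ is open. This half is essentially formal.

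The substantial direction is ``$f$ open $\Rightarrow f_*$ open''. Here I would first translate openness of $f$ into lower semicontinuity of the fibre multifunction $y\mapsto f^{-1}(y)$, and thence into lower semicontinuity of the closed, convex-valued multifunction $y\mapsto\mathfrak{M}(f^{-1}(y))$ with values in the compact convex set $\mathfrak{M}(X)$. To verify openness of $f_*$ at a given $\mu_0$, with $\nu_0=f_*\mu_0$, I must show that each basic neighbourhood $V=\{\mu\colon|\int\phi_i\,d\mu-\int\phi_i\,d\mu_0|<\varepsilon,\ i\le n\}$ has $f_*[V]$ a neighbourhood of $\nu_0$, i.e.\ that every $\nu$ close to $\nu_0$ admits a preimage in $V$. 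The lift is built from a disintegration $\mu_0=\int_Y\mu_0^{y}\,d\nu_0(y)$ with $\mu_0^{y}\in\mathfrak{M}(f^{-1}(y))$: using the lower semicontinuity above together with a Michael-type (approximate) continuous selection, one chooses fibre measures $\tilde\mu^{y}\in\mathfrak{M}(f^{-1}(y))$ that stay close to $\mu_0^{y}$, and sets $\mu=\int_Y\tilde\mu^{y}\,d\nu(y)$. Then $f_*\mu=\nu$ automatically (each $\tilde\mu^y$ lives over $y$), while $\int\phi_i\,d\mu\approx\int(\int\phi_i\,d\mu_0^{y})\,d\nu_0(y)=\int\phi_i\,d\mu_0$ once $\tilde\mu^{y}\approx\mu_0^{y}$ and $\nu\approx\nu_0$, so $\mu\in V$ for $\nu$ in a suitable neighbourhood $N$ of $\nu_0$.

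The main obstacle is exactly this construction of the lift. The naive device of transporting mass along a partition of unity $\{\rho_k\}$ subordinate to the open cover $\{f[U_k]\}$ fails to control $\int\phi_i\,d\mu$, because membership of $f(z)$ in $f[U_k]$ does not place $z$ itself in $U_k$; one genuinely needs the \emph{selection to shadow the prescribed disintegration $\mu_0^{y}$}, which is where lower semicontinuity is used in its full strength and where all the quantitative bookkeeping for $N$ is concentrated. Moreover, both the disintegration and Michael's theorem are cleanest for metrizable $X$, so in the general compact Hausdorff setting I would first reduce to the metric case by writing $f$ as an inverse limit of open maps of compact metric spaces and passing to the limit. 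Once these technical points are secured, the equivalence follows.
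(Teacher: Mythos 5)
First, note that the paper offers no proof of this statement: Theorem~A is quoted from Ditor--Eifler \cite{DE} as a known result, so there is no ``paper's proof'' to match your argument against; your proposal has to stand on its own.

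Your first direction ($f_*$ open $\Rightarrow$ $f$ open) is correct and complete: the Urysohn function $\phi$ with $\phi(x)=1$ and $\phi=0$ off $U$, the neighbourhood $V=\{\mu: \mu(\phi)>\tfrac12\}$, and the observation that $f_*\mu=\delta_y$ forces $\mu(f^{-1}(y))=1$, hence $\mu(U\cap f^{-1}(y))=\mu(U)>\tfrac12$, do yield $\{y:\delta_y\in f_*[V]\}\subseteq f[U]$, and pulling back along $y\mapsto\delta_y$ gives an open neighbourhood of $f(x)$ inside $f[U]$. The gap is entirely in the substantial direction ($f$ open $\Rightarrow$ $f_*$ open), and it sits exactly where you yourself locate it: the construction of the lift. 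The disintegration $\mu_0=\int_Y\mu_0^y\,d\nu_0(y)$ produces a family $y\mapsto\mu_0^y$ that is defined only $\nu_0$-a.e.\ and is merely measurable, so there is no lower semicontinuous multifunction whose values the prescribed $\mu_0^y$ select; Michael's theorem gives a continuous selection of a l.s.c.\ closed-convex-valued map into a Banach (or completely metrizable locally convex) space, but it does not give a continuous family $\tilde\mu^y$ that \emph{shadows a prescribed measurable family}, which is what your estimate $\int\phi_i\,d\mu\approx\int(\int\phi_i\,d\mu_0^y)\,d\nu_0(y)$ requires. In addition, $\mathfrak{M}(f^{-1}(y))$ carries the weak-$*$ topology, for which the hypotheses of the selection theorem you invoke are not met in the non-metrizable case, and the proposed rescue --- writing $f$ as an inverse limit of \emph{open} maps of compact metric spaces and checking that openness of the induced maps on measures survives the limit --- is itself a nontrivial claim stated without proof. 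So the hard implication is a plan, not a proof.

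For what it is worth, the classical arguments avoid the disintegration entirely: Ditor--Eifler lift finitely supported measures directly (openness of $f$ lets one lift each atom $\delta_{y_i}$ into a prescribed open set meeting $f^{-1}(y_i)$, and convex combinations of such lifts are dense among the required preimages), and then pass to general $\mu_0$ by a compactness/approximation argument; an alternative route observes that when $f$ is open the functions $\psi^*(y)=\sup_{x\in f^{-1}(y)}\psi(x)$ and $\psi_*$ are continuous (the fibre map is both u.s.c.\ and l.s.c.), which is precisely the mechanism the present paper exploits in its Lemma~\ref{10}A and Theorem~\ref{10}C. Reworking your second half along either of these lines would close the gap.
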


\begin{thmB'}[{Glasner~\cite{G07, G19}}]
Let $f\colon X\rightarrow Y$ be a continuous surjection between compact metric spaces. If $f$ is semi-open, then $f_*\colon\mathfrak{M}(X)\rightarrow\mathfrak{M}(Y)$ is a semi-open surjection.
\end{thmB'}
\noindent
In fact, it turns out that condition ``$f$ is semi-open'' is also necessary for that ``$f_*$ is semi-open'' as follows:

\begin{thmB''}
Let $f\colon X\rightarrow Y$ be a continuous surjection of compact Hausdorff spaces. If the induced map $f_*\colon\mathfrak{M}(X)\rightarrow\mathfrak{M}(Y)$ is semi-open, then $f$ is semi-open.
\end{thmB''}

\begin{proof}
Let $U\not=\emptyset$ be an open set in $X$. We shall prove that $\mathrm{int}\,f[U]\not=\emptyset$ in $Y$. For that, let
\begin{enumerate}
\item[] $\mathcal {U}=\{\mu\in\mathfrak{M}(X)\,|\,U\cap\mathrm{supp}(\mu)\not=\emptyset\}$.
\end{enumerate}
Since $\mu\in\mathfrak{M}(X)\mapsto\mathrm{supp}(\mu)\in 2^X$ is lower semi-continuous (cf.~\cite[Lem.~VII.1.4]{Wo}), $\mathcal {U}$ is open in $\mathfrak{M}(X)$. As $f_*$ is semi-open by hypothesis, it follows that there exists an open subset $\mathcal {V}$ of $\mathfrak{M}(Y)$ with $\emptyset\not=\mathcal {V}\subseteq f_*[\mathcal {U}]$.

Because $\overline{\mathrm{co}}(\delta_Y)=\mathfrak{M}(Y)$ where $\delta_Y=\{\delta_y\,|\,y\in Y\}$ is the set of Dirac measures in $Y$, we can choose a measure $\nu=\sum_{i=1}^n\alpha_i\delta_{y_i}\in\mathcal {V}$ with $\alpha_i>0$ for $i=1,\dotsc,n$ and $\sum_i\alpha_i=1$. Further, we can choose an $\varepsilon\in\mathscr{U}_Y$, the uniformity structure of $Y$, such that for all $(y_1^\prime,\dotsc,y_n^\prime)\in\varepsilon[y_1]\times\dotsm\times\varepsilon[y_n]$, there exists an irreducible convex combination
$\nu^\prime=\alpha_1^\prime\delta_{y_1^\prime}+\dotsm+\alpha_n^\prime\delta_{y_n^\prime}\in\mathcal {V}$. By $\mathcal {V}\subseteq f_*[\mathcal {U}]$, it follows that there is some $\mu^\prime\in\mathcal {U}$ with $f_*(\mu^\prime)=\nu^\prime$. As $\mathrm{supp}(\nu^\prime)=\{y_1^\prime,\dotsc,y_n^\prime\}$, $U\cap\mathrm{supp}(\mu^\prime)\not=\emptyset$ and $\mathrm{supp}(\mu^\prime)\subseteq f^{-1}[\mathrm{supp}(\nu^\prime)]\subseteq\bigcup_{i=1}^nf^{-1}(y_i^\prime)$, it follows that
\begin{enumerate}
\item[] $\{y_1^\prime,\dotsc,y_n^\prime\}\cap f[U]\not=\emptyset$.
\end{enumerate}
Clearly, this implies that $\varepsilon[y_i]\subseteq f[U]$ for some $i$ with $1\le i\le n$. Thus, $\mathrm{int}\,f[U]\not=\emptyset$. The proof is completed.
\end{proof}

Consequently we have concluded the following theorem by a combination of Theorem~B$^\prime$ and Theorem~B$^{\prime\prime}$:

\begin{thmB}
Let $f\colon X\rightarrow Y$ be a continuous surjection of compact metric spaces. Then $f$ is semi-open iff $f_*\colon\mathfrak{M}(X)\rightarrow\mathfrak{M}(Y)$ is semi-open.
\end{thmB}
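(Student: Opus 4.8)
The plan is to observe that Theorem~B is nothing more than the conjunction of the two one-directional results already established, specialized to the compact metric setting, so no new argument is required. I would simply assemble the two implications.

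First I would dispose of the ``only if'' direction. Assuming $f\colon X\to Y$ is semi-open with $X,Y$ compact metric, Theorem~B$^\prime$ (Glasner) applies verbatim and yields that the induced surjection $f_*\colon\mathfrak{M}(X)\to\mathfrak{M}(Y)$ is semi-open. Next, for the ``if'' direction, I would invoke Theorem~B$^{\prime\prime}$: if $f_*$ is semi-open, then $f$ is semi-open. Since that implication was proved above for \emph{arbitrary} compact Hausdorff spaces, it applies a fortiori when $X$ and $Y$ are compact metric. Combining the two gives the desired equivalence.

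The point worth checking — and essentially the only nonroutine observation — is that the hypotheses of the two cited results are compatible here: Theorem~B$^\prime$ requires the compact metric assumption, which is part of the statement of Theorem~B, while Theorem~B$^{\prime\prime}$, stated for compact Hausdorff spaces, covers the compact metric case in particular. Thus there is no genuine obstacle to the proof. I would, however, flag where the metric hypothesis is actually consumed: it is needed only for the forward implication via Theorem~B$^\prime$, whereas the reverse implication is metric-free. Whether the metric assumption can be dropped from the forward direction — i.e.\ whether $f_*$ is semi-open whenever $f$ is semi-open between general compact Hausdorff spaces — is the substantive open point, but it lies outside what Theorem~B asserts and so need not be addressed here.
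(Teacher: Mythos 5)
Your proposal is correct and coincides exactly with the paper's own justification: the authors state Theorem~B as an immediate combination of Theorem~B$^{\prime}$ (for the forward implication, where the metric hypothesis is used) and Theorem~B$^{\prime\prime}$ (for the reverse implication, valid for compact Hausdorff spaces). No further comment is needed.
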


On the other hand, recall that the (largest) {\it hyperspace}, denoted $2^X$, of $X$ is defined to be the collection of all non-empty closed subsets of $X$ equipped with the Vietoris topology\,(see \cite[$\S$II.1]{Wo} or \cite[Thm.~I.1.2]{IN}). Note here that a base for the Vietoris topology is formed by the sets of the form
\begin{enumerate}
\item[] $\langle U_1,\dotsc,U_n\rangle:=\left\{K\in 2^X\,|\,K\subseteq U_1\cup\dotsm\cup U_n, K\cap U_i\not=\emptyset, 1\le i\le n\right\}$
\end{enumerate}
for all $n\ge1$ and all non-empty open sets $U_1$, $\dotsc$, $U_n$ in $X$. Then $X$ is a compact Hausdorff space iff so is $2^X$, and $X$ is metrizable iff $2^X$ is metrizable (cf.~\cite{Mi}, \cite[Thm.~II.1.1]{Wo}, \cite[Thm.~I.3.3]{IN}).

Since Kelley 1942 \cite{K42} the hyperspace theory became an important way of obtaining information on the structure of a topological space $X$ (in continua--compact connected metric spaces) by studying properties of the hyperspace $2^X$ and its hyperspace $2^{2^X}$. In this note we shall give other characterizations of open and semi-open maps with the help of the hyperspaces $2^X$ and $2^Y$ (see Thm.~\ref{3} and Thm.~\ref{4}).
Moreover, we shall consider the interrelation of the irreducibility of $f$ and its induced map $2^f$ (see Thm.~\ref{9}B), and improve Theorem~B$^\prime$ (see Thm.~\ref{10}C).

\begin{se}\label{1}
Let $X$, $Y$ be compact Hausdorff spaces. Let $\phi\colon X\rightarrow Y$ be a continuous surjection. Then $\phi$ induces maps
\begin{enumerate}
\item[] $2^\phi\colon 2^X\rightarrow2^Y$ by $K\in2^X\mapsto\phi[K]\in 2^Y$\quad and\quad $\phi_{\mathrm{ad}}\colon 2^Y\rightarrow2^X$ by $K\in2^Y\mapsto\phi^{-1}[K]\in 2^X$.
\end{enumerate}
Then, $2^\phi$ is a continuous surjection. Moreover, $\phi_{\mathrm{ad}}$ is continuous iff $\phi_{\mathrm{ad}}|_Y=\phi^{-1}$ is continuous where $Y$ is identified with $\{\{y\}\,|\,y\in Y\}\subset 2^Y$ by Lemma below, iff $\phi$ is open; see \cite{Mi} and \cite[Thm.~II.1.3]{Wo}.

\begin{lem*}[{cf.~\cite[Rem.~II.1.4]{Wo}}]
Let $X$ be a compact Hausdorff space and let $n\ge1$ be an integer. Then the map
\begin{enumerate}
\item[] $i_n\colon X^n\rightarrow2^X$ defined by $(x_1,\dotsc,x_n)\mapsto\{x_1,\dotsc,x_n\}$
\end{enumerate}
is continuous. Moreover, it is locally 1-1 in the points $(x_1,\dotsc,x_n)$ with $x_i\not=x_j$ for all $i\not=j$. Also note that $\bigcup\{i_n[X^n]\,|\,n\in\mathbb{N}\}$ is dense in $2^X$.
\end{lem*}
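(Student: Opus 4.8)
The statement has three independent parts, and the plan is to dispatch them in turn; each reduces to elementary manipulations with the Vietoris subbasis, so the work is mostly bookkeeping. For continuity of $i_n$, I would first recall that the Vietoris topology is generated, as a subbasis, by the families $U^{\uparrow}:=\{K\in2^X\,|\,K\subseteq U\}$ and $U^{\downarrow}:=\{K\in2^X\,|\,K\cap U\not=\emptyset\}$ as $U$ ranges over the non-empty open subsets of $X$, since each listed basic set satisfies $\langle U_1,\dots,U_m\rangle=(\bigcup_iU_i)^{\uparrow}\cap\bigcap_iU_i^{\downarrow}$ and conversely $U^{\uparrow}=\langle U\rangle$, $U^{\downarrow}=\langle X,U\rangle$ are themselves basic. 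Hence it suffices to check that $i_n^{-1}[U^{\uparrow}]$ and $i_n^{-1}[U^{\downarrow}]$ are open in $X^n$. The computation is immediate: $\{x_1,\dots,x_n\}\subseteq U$ holds iff every $x_i\in U$, so $i_n^{-1}[U^{\uparrow}]=U\times\dots\times U$; and $\{x_1,\dots,x_n\}\cap U\not=\emptyset$ holds iff some $x_i\in U$, so $i_n^{-1}[U^{\downarrow}]=\bigcup_{i=1}^n X^{i-1}\times U\times X^{n-i}$. Both are visibly open, giving continuity.

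For the local injectivity claim, I would fix a point $(x_1,\dots,x_n)\in X^n$ with $x_i\not=x_j$ whenever $i\not=j$, and use that $X$ is Hausdorff to select pairwise disjoint open neighbourhoods $U_i\ni x_i$. The neighbourhood $W:=U_1\times\dots\times U_n$ then witnesses local injectivity: if $(y_1,\dots,y_n),(z_1,\dots,z_n)\in W$ satisfy $\{y_1,\dots,y_n\}=\{z_1,\dots,z_n\}$, then since the $U_i$ are disjoint each $U_i$ contains exactly one member of this common set, which forces $y_i=z_i$ for every $i$. Note that the disjointness also guarantees the $n$ coordinates of any point of $W$ stay distinct, so nothing collapses to a set of fewer than $n$ points.

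Finally, for density of $\bigcup_n i_n[X^n]$, which is exactly the collection of all non-empty finite subsets of $X$, I would test against an arbitrary non-empty basic open set $\langle U_1,\dots,U_m\rangle$ with the $U_i$ non-empty and open. Choosing any $x_i\in U_i$ and setting $F=\{x_1,\dots,x_m\}$, one checks directly that $F\subseteq\bigcup_iU_i$ and $x_i\in F\cap U_i$, so $F\in\langle U_1,\dots,U_m\rangle\cap i_m[X^m]$; thus the union meets every non-empty basic open set, which is density. I do not anticipate a genuine obstacle: the only point demanding slight care is the injectivity argument in part two, where one must invoke the disjointness of the $U_i$ both to conclude $y_i=z_i$ and to see that points of $W$ never degenerate to sets of smaller cardinality.
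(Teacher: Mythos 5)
Your proof is correct; the paper itself gives no argument for this lemma, merely citing \cite[Rem.~II.1.4]{Wo}, and your subbasis verification of continuity, the disjoint-neighbourhoods argument for local injectivity, and the check of density against basic Vietoris open sets constitute the standard proof one would find there. No gaps.
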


It is natural to wonder about which properties are transmitted between $\phi$ and $2^\phi$. This problem has been addressed by several authors (cf., e.g.,~\cite{Mi, CIM, C}). We shall be concerned with the ``openness'' and ``semi-openness'' here.
\end{se}

\begin{lem}\label{2}
Let $\phi\colon X\rightarrow Y$ be a continuous surjection between Hausdorff spaces with $X$ locally compact. Then:
\begin{enumerate}[(a)]
\item $\phi$ is semi-open iff the preimage of every dense subset of $Y$ is dense in $X$\,(cf.~\cite[Lem.~2.1]{G19}).
\item $\phi$ is semi-open iff the preimage of every residual subset of $Y$ is residual in $X$.
\end{enumerate}
\end{lem}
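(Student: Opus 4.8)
The plan is to handle both equivalences through a single reformulation of semi-openness, namely that $\phi$ is semi-open iff $\mathrm{int}\,\phi[U]\neq\emptyset$ for every non-empty open $U\subseteq X$, and to exploit the elementary identity $\overline{Y\setminus S}=Y\setminus\mathrm{int}\,S$ in order to translate density statements about images into interior statements.

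For (a), I would prove the forward implication directly: given $\phi$ semi-open and $D\subseteq Y$ dense, for any non-empty open $U\subseteq X$ the set $\mathrm{int}\,\phi[U]$ is non-empty open in $Y$, hence meets $D$, and any such point of $D$ lifts through $\phi$ to a point of $U\cap\phi^{-1}[D]$; thus $\phi^{-1}[D]$ is dense. For the converse I would argue by contraposition: if some non-empty open $U$ has $\mathrm{int}\,\phi[U]=\emptyset$, then $D:=Y\setminus\phi[U]$ satisfies $\overline D=Y\setminus\mathrm{int}\,\phi[U]=Y$, so $D$ is dense, yet $\phi^{-1}[D]\cap U=\emptyset$ because $U\subseteq\phi^{-1}[\phi[U]]$, so $\phi^{-1}[D]$ is not dense. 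Neither half of (a) needs local compactness.

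For (b), the forward direction reduces to (a): if $R\supseteq\bigcap_{n}G_n$ with each $G_n\subseteq Y$ dense open, then each $\phi^{-1}[G_n]$ is open by continuity and dense by part (a), so $\phi^{-1}[R]\supseteq\bigcap_n\phi^{-1}[G_n]$ is residual. The substance lies in the converse, where local compactness enters decisively. Arguing contrapositively, suppose $\mathrm{int}\,\phi[U_0]=\emptyset$ for some non-empty open $U_0$. The difficulty is that $\phi[U_0]$ may well be dense—empty interior is far weaker than being nowhere dense or meager—so $Y\setminus\phi[U_0]$ need not be residual and cannot be used directly. Local compactness of $X$ resolves this: choosing $x\in U_0$ and a non-empty open $U$ with $\overline U$ compact and $\overline U\subseteq U_0$, the image $K:=\phi[\overline U]$ is compact, hence closed in the Hausdorff space $Y$, while $\mathrm{int}\,K\subseteq\mathrm{int}\,\phi[U_0]=\emptyset$ forces $K$ to be nowhere dense. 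Then $R:=Y\setminus K$ is dense open, hence residual, whereas $\phi^{-1}[R]\cap U=\emptyset$ since $\phi[U]\subseteq K$. As $X$ is locally compact Hausdorff it is a Baire space, so the non-empty open set $U$ is non-meager; since $U\subseteq X\setminus\phi^{-1}[R]$, the preimage $\phi^{-1}[R]$ is not residual, contradicting the hypothesis.

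I expect the only genuine obstacle to be this gap between ``$\mathrm{int}\,\phi[U]=\emptyset$'' and ``$\phi[U]$ meager,'' and the two roles played by local compactness—producing a relatively compact neighborhood $U$ whose image closure is automatically nowhere dense, and guaranteeing that $X$ is Baire so that non-empty open sets are non-meager—are precisely what close it.
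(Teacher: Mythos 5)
Your proof is correct and follows essentially the same route as the paper: the forward direction of (a) via density meeting $\mathrm{int}\,\phi[U]$, the converse via the dense set $Y\setminus\phi[U]$, the forward direction of (b) by reduction to (a), and the converse of (b) by shrinking to a relatively compact open $U$ whose image is closed with empty interior, hence nowhere dense, and invoking the Baire property of the locally compact Hausdorff space $X$. Your write-up is in fact more explicit than the paper's rather terse version about why local compactness is needed and exactly where the Baire category argument enters.
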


\begin{proof}
\item (a): If $\phi$ is semi-open and let $A\subset Y$ be a dense set and set $U=X\setminus\overline{\phi^{-1}[A]}$, then $\phi[U]\cap A\not=\emptyset$ whenever $U\not=\emptyset$, a contradiction. Thus, $U=\emptyset$ so that $\phi^{-1}[A]$ is dense in $X$. Conversely, suppose the preimage of every dense subset of $Y$ is dense in $X$. Let $U\not=\emptyset$ be open in $X$. If $\textrm{int}\,\phi[U]=\emptyset$, then $\phi^{-1}[Y\setminus\phi[U]]\cap U\not=\emptyset$, a contradiction. Thus, $\textrm{int}\,\phi[U]\not=\emptyset$.
\item (b)-\textsl{Necessity:} Let $A=\bigcap_{i=1}^\infty A_i$, where $A_i$, $i=1,2,\dots$ are open dense subsets of $Y$. Then, obviously, $\phi^{-1}[A]=\bigcap_i\phi^{-1}[A_i]$. It follows from (a) that $\phi^{-1}[A_i]$ are open dense in $X$. Thus, $\phi^{-1}[A]$ is a residual subset of $X$.
\item (b)-\textsl{Sufficiency:} Let $U\not=\emptyset$ be open in $X$. If $\textrm{int}\,\phi[U]=\emptyset$, then there exists an open set $V$ with $\emptyset\not=V\subseteq\bar{V}\subseteq U$ such that $\bar{V}$ is compact in $X$. Further,
    \begin{enumerate}
    \item[] $\phi^{-1}[Y\setminus\phi[\bar{V}]]\cap \bar{V}\not=\emptyset$\quad and\quad $\phi[\bar{V}]\cap(Y\setminus\phi[\bar{V}])\not=\emptyset$,
    \end{enumerate}
    a contradiction. Thus, $\mathrm{int}\,\phi[U]\not=\emptyset$. The proof is completed.
\end{proof}

\begin{thm}[{cf.~\cite[Thm.~4.3]{H97} for $X,Y$ in continua}]\label{3}
Let $f\colon X\rightarrow Y$ be a continuous surjection between compact Hausdorff spaces. Then $f$ is open iff $2^f\colon 2^X\rightarrow2^Y$ is open.
\end{thm}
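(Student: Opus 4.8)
The plan is to prove both implications by reducing the openness of $2^f$ to its behaviour on the Vietoris base. Since a continuous map is open as soon as it carries every member of a base to an open set, and since every open subset of $2^X$ is a union of basic sets $\langle U_1,\dots,U_n\rangle$, it suffices to understand the images $2^f[\langle U_1,\dots,U_n\rangle]$. The heart of the forward implication is the set-theoretic identity
\[
2^f[\langle U_1,\dots,U_n\rangle]=\langle f[U_1],\dots,f[U_n]\rangle ,
\]
valid whenever $f$ is open, for then each $f[U_i]$ is open and the right-hand side is itself a basic Vietoris open set.

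For the direction ``$2^f$ open $\Rightarrow$ $f$ open'' I would not need the full identity. Given a non-empty open $U\subseteq X$ and a point $x_0\in U$, the singleton $\{x_0\}$ lies in $\langle U\rangle$, so $\{f(x_0)\}=2^f(\{x_0\})$ lies in the open set $2^f[\langle U\rangle]$. Choosing a basic neighbourhood $\langle V_1,\dots,V_m\rangle\subseteq 2^f[\langle U\rangle]$ of this singleton forces $f(x_0)\in V:=\bigcap_{j}V_j$, an open set. For any $y\in V$ one has $\{y\}\in\langle V_1,\dots,V_m\rangle\subseteq 2^f[\langle U\rangle]$, hence $\{y\}=f[K]$ for some non-empty closed $K\subseteq U$; any point of $K$ then witnesses $y\in f[U]$. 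Thus $V\subseteq f[U]$ and $f(x_0)\in\operatorname{int}f[U]$, so $f$ is open. Here I use the identification $Y\cong i_1[Y]\subseteq 2^Y$ from the Lemma in~\ref{1}.

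For ``$f$ open $\Rightarrow$ $2^f$ open'' the inclusion $2^f[\langle U_1,\dots,U_n\rangle]\subseteq\langle f[U_1],\dots,f[U_n]\rangle$ is immediate from $f[K\cap U_i]\subseteq f[K]\cap f[U_i]$. The reverse inclusion is the main obstacle: given $L\in\langle f[U_1],\dots,f[U_n]\rangle$ I must lift it to a closed $K\subseteq\bigcup_iU_i$ meeting each $U_i$ with $f[K]=L$. The subtlety is that the naive lift $\overline{f^{-1}[L]\cap\bigcup_iU_i}$ can spill out of the open set $\bigcup_iU_i$. I would instead argue by compactness: for each $y\in L$ pick a preimage $x_y\in\bigcup_iU_i$ and, using regularity of $X$, an open $W_y\ni x_y$ with $\overline{W_y}\subseteq\bigcup_iU_i$; since $f$ is open, the sets $f[W_y]$ form an open cover of the compact set $L$, and a finite subcover $f[W_{y_1}],\dots,f[W_{y_k}]$ yields a compact set $D=\bigcup_{j}\overline{W_{y_j}}\subseteq\bigcup_iU_i$. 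Then $K:=(D\cap f^{-1}[L])\cup\{b_1,\dots,b_n\}$, where $b_i\in U_i\cap f^{-1}[L]$ is chosen using $L\cap f[U_i]\neq\emptyset$, is closed, contained in $\bigcup_iU_i$, meets each $U_i$, and satisfies $f[K]=L$. This establishes the identity and hence that $2^f$ carries basic open sets to open sets.

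The step I expect to be delicate is precisely keeping the lifted set inside the prescribed open set $\bigcup_iU_i$ while still surjecting onto $L$. The combination of regularity (to shrink neighbourhoods so their closures stay inside $\bigcup_iU_i$), openness of $f$ (to produce an open cover of $L$), and compactness of $L$ (to pass to a finite subcover) is what makes the lift possible, and this is the only place where the hypothesis that $f$ is open is genuinely used in the forward direction.
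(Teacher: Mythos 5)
Your proof is correct and follows essentially the same route as the paper's: both directions reduce to the identity $2^f[\langle U_1,\dots,U_n\rangle]=\langle f[U_1],\dots,f[U_n]\rangle$ for the forward implication and to testing singletons inside $2^f[\langle U\rangle]$ for the converse. The only place you go beyond the paper is the lifting step --- the paper merely asserts that, $f$ being open and the spaces compact, every compact $B\subseteq f[U]$ lifts to a compact $A'\subseteq U$ with $f[A']=B$, whereas you prove this via regularity, openness of the $f[W_y]$, and a finite subcover; that is the genuinely nontrivial point and your argument for it is sound.
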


\begin{proof}
\item \textsl{Necessity:} Let $\mathcal{A}$ be a member of the base of the hyperspace $2^X$. Then by definition of the Vietoris topology, it follows that there exist nonempty open sets $U_1$, $\dotsc$, $U_n$ in $X$ such that
    \begin{enumerate}
    \item[] $\mathcal{A}=\{K\in 2^X\,|\,K\subseteq U_1\cup\dotsm\cup U_n, K\cap U_i\not=\emptyset\textrm{ for }1\le i\le n\}$.
    \end{enumerate}
    Let $U=U_1\cup\dotsm\cup U_n$. Since $f$ is open, $f[U]$ and $f[U_i]$, $1\le i\le n$, are all open subsets of $Y$. Let
    \begin{enumerate}
    \item[] $\mathcal{B}=\{K\in 2^Y\,|\,K\subseteq f[U], K\cap f[U_i]\not=\emptyset\textrm{ for }1\le i\le n\}$.
    \end{enumerate}
    Clearly, $2^f[\mathcal{A}]\subseteq\mathcal{B}$. In order to prove that $2^f$ is open, it suffices to prove that $2^f[\mathcal{A}]=\mathcal{B}$. For that, we need only prove $\mathcal{B}\subseteq2^f[\mathcal{A}]$.

    Let $B\in\mathcal{B}$ be arbitrarily given. By definitions, there exist points $y_i\in B\cap f[U_i]$ for $i=1,\dotsc,n$. So we can select points $x_i\in U_i$ with $f(x_i)=y_i$ for $1\le i\le n$. Moreover, as $f$ is open and $X, Y$ compact, it follows that there exists a closed set $A^\prime\in 2^X$ with $A^\prime\subseteq U$ such that $f[A^\prime]=B$. Let
    \begin{enumerate}
    \item[] $A=A^\prime\cup\{x_1,\dotsc,x_n\}$.
    \end{enumerate}
    Then $A\in 2^X$ such that $A\subseteq U$ and $x_i\in A\cap U_i\not=\emptyset$ for $i=1,\dotsc,n$. Thus, $A\in\mathcal{A}$ and then $2^f[\mathcal{A}]=\mathcal{B}$.

\item \textsl{Sufficiency:} Let $V\subseteq X$ be an open nonempty set. We need prove that $f[V]$ is open in $Y$. As
\begin{enumerate}
\item[] $\langle V\rangle=\{K\in2^X\,|\,K\subseteq V\}$
\end{enumerate}
is an open subset of $2^X$ and $2^f\colon2^X\rightarrow2^Y$ is open, it follows that $2^f[\langle V\rangle]\subseteq 2^Y$ is open. Since $V=\bigcup\{F\,|\,F\in\langle V\rangle\}$, hence $f[V]=\bigcup\{K\,|\,K\in2^f[\langle V\rangle]\}$. Given $y\in f[V]$, $\{y\}\in 2^f[\langle V\rangle]$ and there exists an open neighborhood $\langle V_{y,1},\dotsc, V_{y,n}\rangle$ of $\{y\}$ in $2^Y$ such that $\{y\}\in\langle V_{y,1},\dotsc, V_{y,n}\rangle\subseteq 2^f[\langle V\rangle]$. Then
\begin{enumerate}
\item[] $f[V]\subseteq\bigcup_{y\in f[V]}(\langle V_{y,1},\dotsc, V_{y,n}\rangle)\subseteq\bigcup\{K\,|\,K\in2^f[\langle V\rangle]\}=f[V]$.
\end{enumerate}
Thus, $f[V]=\bigcup_{y\in f[V]}(\langle V_{y,1},\dotsc, V_{y,n}\rangle)$ is open in $Y$, for each $\langle V_{y,1},\dotsc, V_{y,n}\rangle$ is open in $Y$. The proof is completed.
\end{proof}

Let $f\colon X\rightarrow Y$ be a continuous surjection between compact Hausdorff spaces. Now define
\begin{enumerate}
\item[] $2^{X,f}=\{A\in 2^X\,|\,\exists y\in Y\textrm{ s.t. } A\subseteq f^{-1}(y)\}$,
\end{enumerate}
which is called the \textit{quasifactor representation} of $Y$ in $X$; and, as $i_1\colon Y\rightarrow2^Y$ is an embedding (see Lem.~\ref{1}), we may identify $Y$ with $i_1[Y]=\{\{y\}\,|\,y\in Y\}\subseteq2^Y$ as mentioned before. Thus,
\begin{enumerate}
\item[] $f^\prime=2^f|_{2^{X,f}}\colon 2^{X,f}\rightarrow Y$.
\end{enumerate}
is a well-defined continuous surjection, called the \textit{quasifactor} of $f$. It is of interest to know when $f^\prime$ is actually a factor of $f$.
Then Theorem~\ref{3} has an interesting variation as follows:

\begin{thm3'}
Let $f\colon X\rightarrow Y$ be a continuous surjection between compact Hausdorff spaces.
Then $f$ is open iff $f^\prime$ is open iff $f_*$ is open.
\end{thm3'}

\begin{proof}
In view of Theorem~A we need only prove the first ``iff''.
Suppose $f$ is open. Then $2^f$ is open by Theorem~\ref{3}. Set $\mathcal{Y}=\{\{y\}\,|\,y\in Y\}$. Then $2^{X,f}=(2^f)^{-1}[\mathcal{Y}]$. Thus, $f^\prime$ is open. Now conversely, if $f^\prime\colon2^{X,f}\rightarrow Y$ is open, then $f$ is obviously open. The proof is completed.
\end{proof}

\begin{thm}[{cf.~\cite[Lem.~2.3]{HZZ} for ``only if'' part}]\label{4}
Let $f\colon X\rightarrow Y$ be a continuous surjection between compact Hausdorff spaces. Then $f$ is semi-open iff $2^f\colon 2^X\rightarrow2^Y$ is semi-open.
\end{thm}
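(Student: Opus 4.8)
The plan is to establish the two implications separately; the reverse implication, that semi-openness of $2^f$ forces semi-openness of $f$, is the genuinely new half and admits a short direct argument, while the forward implication is \cite[Lem.~2.3]{HZZ} and is where the technical work sits.

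For the reverse direction I would fix a nonempty open $U\subseteq X$ and pass to the basic Vietoris-open set $\langle U\rangle=\{K\in 2^X\mid K\subseteq U\}$, which is nonempty and open in $2^X$. Since $2^f$ is semi-open, $\mathrm{int}\,2^f[\langle U\rangle]\neq\emptyset$, so it contains some basic open set $\langle V_1,\dots,V_m\rangle$ with each $V_j$ nonempty and open in $Y$. The decisive observation is that every member of $2^f[\langle U\rangle]$ has the form $f[K]$ with $K\subseteq U$ and is therefore contained in $f[U]$; hence every $B\in\langle V_1,\dots,V_m\rangle$ satisfies $B\subseteq f[U]$. Letting $B$ range over these sets — for an arbitrary $y\in V_1$ take $B=\{y\}\cup\{y_j : y_j\in V_j,\ 2\le j\le m\}$ — shows $V_1\cup\dots\cup V_m\subseteq f[U]$, so $\mathrm{int}\,f[U]\supseteq V_1\neq\emptyset$ and $f$ is semi-open. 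This uses only the Vietoris base and no metrizability, so I expect it to be routine.

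For the forward direction I would fix a nonempty basic open $\langle U_1,\dots,U_n\rangle$ in $2^X$, set $W_i=\mathrm{int}\,f[U_i]$ (nonempty by semi-openness of $f$), and try to prove the inclusion $\langle W_1,\dots,W_n\rangle\subseteq 2^f[\langle U_1,\dots,U_n\rangle]$; as $\langle W_1,\dots,W_n\rangle$ is a nonempty open subset of $2^Y$ this would give $\mathrm{int}\,2^f[\langle U_1,\dots,U_n\rangle]\neq\emptyset$. To realise a given $B\in\langle W_1,\dots,W_n\rangle$ as $f[A]$ with $A\in\langle U_1,\dots,U_n\rangle$, I would use normality of the compact space $B$ to choose a closed shrinking $B=C_1\cup\dots\cup C_n$ with $\emptyset\neq C_i\subseteq W_i\subseteq f[U_i]$, then for each $i$ produce a closed set $\emptyset\neq A_i\subseteq U_i$ with $f[A_i]=C_i$, and finally take $A=\bigcup_i A_i$, which is closed, lies in $\bigcup_i U_i$, meets every $U_i$, and satisfies $f[A]=\bigcup_i C_i=B$.

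The main obstacle is exactly the per-piece step: given a compact $C\subseteq\mathrm{int}\,f[U_i]$ one must find a closed subset of the \emph{open} set $U_i$ mapping onto $C$, equivalently a closed set inside the relatively open set $f^{-1}[C]\cap U_i$ (which already meets every fiber of $f|_{f^{-1}[C]}$) that still surjects onto $C$. Since $f$ need not be open, the image of a closed subset of $U_i$ may be nowhere dense, so neither a crude closed-preimage nor a pointwise selection succeeds; controlling the construction requires compactness of the fibers together with the semi-openness of $f$, and this is precisely the content of \cite[Lem.~2.3]{HZZ}, which I would invoke (or reprove by the selection argument given there). Combining the two halves yields the asserted equivalence.
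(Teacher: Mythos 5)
Your reverse half ($2^f$ semi-open $\Rightarrow f$ semi-open) is correct and is essentially the paper's sufficiency argument: pass to $\langle U\rangle$, find a basic open $\langle V_1,\dotsc,V_m\rangle$ inside $2^f[\langle U\rangle]$, and note that every member of $2^f[\langle U\rangle]$ is contained in $f[U]$. Your finite-set witnesses $\{y\}\cup\{y_2,\dotsc,y_m\}$ are in fact slightly cleaner than the paper's version, which at the corresponding point feeds the \emph{open} set $V_1\cup\dotsm\cup V_m$ into $\langle V_1,\dotsc,V_m\rangle$ as though it were a closed set.

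Your forward half also follows the paper's route in all essentials (the paper phrases it through Lemma~\ref{2}(a), proving that $(2^f)^{-1}[\mathcal{B}]$ is dense for every dense $\mathcal{B}\subseteq 2^Y$, rather than exhibiting $\langle W_1,\dotsc,W_n\rangle$ inside the image directly, but the lift $A=\bigl(\bigcup_i A_i\bigr)\cup\{x_1,\dotsc,x_n\}$ is the same construction). The real issue is the step you correctly single out as the main obstacle and then outsource: given a compact $C\subseteq\mathrm{int}\,f[U]$ with $U$ open, produce a closed $A\subseteq U$ with $f[A]=C$. Be aware that this is not a soft point-set fact: it fails for general continuous surjections of compacta. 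Take $Z=\{(x,y)\in[0,1]^2 : y\le x\}\cup(\{0\}\times[0,1])$ with $g(x,y)=x$ and $U=\{(x,y)\in Z : y>0\}$; then $C=[0,1]=\mathrm{int}\,g[U]$, but any closed $A\subseteq U$ is compact and disjoint from $Z\cap\{y=0\}$, hence lies in $\{y\ge\varepsilon\}$ for some $\varepsilon>0$ and misses every fiber over $x\in(0,\varepsilon)$. (This $g$ is not semi-open, so the theorem survives, but it shows that semi-openness of $f$ must genuinely enter the lifting step, and your outline does not say how.) To be fair, the paper is equally terse at exactly this point: it derives the existence of $A$ from ``$V\subseteq f[U]$ and $B$ compact'' alone, which the example above shows is insufficient as a justification. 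So your proposal reproduces the paper's proof, soft spot included; to make it complete you must either actually prove the lifting lemma using semi-openness of $f$, or verify that the argument for \cite[Lem.~2.3]{HZZ} goes through for arbitrary compact Hausdorff spaces rather than only the class of spaces treated there.
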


\begin{proof}
\item \textsl{Necessity\,(different with \cite{HZZ}):} Let $\mathcal {B}\subseteq 2^Y$ be a dense subset. By Lemma~\ref{2}a, we need only prove that $\mathcal {A}:=(2^f)^{-1}[\mathcal {B}]$ is dense in $2^X$.
For that, let $\langle U_1,\dotsc,U_n\rangle$ be a basic open set in $2^X$. It is enough to prove that $\mathcal {A}\cap\langle U_1,\dotsc,U_n\rangle\not=\emptyset$. Indeed, since $f$ is semi-open, hence
\begin{enumerate}
\item[] $V_i:=\mathrm{int}\,f[U_i]\not=\emptyset$, $i=1,\dotsc,n$.
\end{enumerate}
Let $V=V_1\cup\dotsm\cup V_n$, $U=U_1\cup\dotsm\cup U_n$.
Then $\langle V_1,\dotsc,V_n\rangle$ is an open non-empty subset of $2^Y$. So $\mathcal {B}\cap\langle V_1,\dotsc,V_n\rangle\not=\emptyset$. Let $B\in\mathcal {B}\cap\langle V_1,\dotsc,V_n\rangle$. As $V\subseteq f[U]$ and $B\in 2^Y$ is compact, it follows that we can select $A\in2^X$ with $A\subseteq U$ such that $f[A]=B$. Moreover, by $B\cap V_i\not=\emptyset$, we can take a point $x_i\in U_i$ with $f(x_i)\in B$ for $i=1,\dotsc,n$. Now set
\begin{enumerate}
\item[] $K=A\cup\{x_1,\dotsc,x_n\}$.
\end{enumerate}
Then $K\subseteq U$, $K\cap U_i\not=\emptyset$ for all $i=1, \dotsc, n$ and $f[K]=B$. Thus, $K\in\mathcal {A}$ and further $\mathcal {A}\cap\langle U_1,\dotsc,U_n\rangle\not=\emptyset$.

\item \textsl{Sufficiency:} Let $U\not=\emptyset$ be an open set in $X$. Since $\langle U\rangle=\{K\in2^X\,|\,K\subseteq U\}$ is open in $2^X$ and $2^f$ is semi-open, it follows that $2^f[\langle U\rangle]=\{f[K]\,|\,K\in\langle U\rangle\}$ has a non-empty interior. Therefore, there exists a non-empty basic open subset of $2^Y$, say $\langle V_1,\dotsc,V_n\rangle\subseteq 2^f[\langle U\rangle]$. Put
    \begin{enumerate}
    \item[] $B=V_1\cup\dotsm\cup V_n$.
    \end{enumerate}
    Then $B\not=\emptyset$ is open in $Y$ such that $B\subseteq f[U]$. For, as $B\in\langle V_1,\dotsc,V_n\rangle\subseteq2^f[\langle U\rangle]$, there exists
    $A\in\langle U\rangle$ such that $f[A]=B$; thus, $A\subseteq U$ implies $B\subseteq f[U]$. The proof is completed.
\end{proof}

Note here that we do not know whether or not Theorem~\ref{4} has a variation similar to Theorem~\ref{3}$^\prime$. That is, we do not know how to characterize the semi-openness of $f^\prime\colon2^{X,f}\rightarrow Y$.

\begin{se}\label{5}
A \textit{flow} is a triple $(T,X,\pi)$, simply denoted $\mathscr{X}$ or $T\curvearrowright X$ if no confusion, where $T$ is a topological group, called the phase group; $X$ is a topological space, called the phase space; and where $\pi\colon T\times X\xrightarrow{(t,x)\mapsto x}X$, the action, is a jointly continuous map, such that
$ex=x$ and $(st)x=s(tx)$ for all $x\in X$ and $s,t\in T$.
Here $e$ is the identity of $T$.

Every flow $\mathscr{X}$ can induce a so-called \textit{hyperflow} $(T,2^X,2^\pi)$\,(cf.~\cite{Ko} or \cite[Thm.~II.1.6]{Wo}), denoted $2^\mathscr{X}$, where the phase group is $T$, the phase space is $2^X$ and the action is the induced mapping $2^\pi\colon T\times 2^X\xrightarrow{(t,K)\mapsto tK}2^X$. Here $tK=\{tx\,|\,x\in K\}$ for $t\in T$ and $K\in 2^X$.

If $\overline{Tx}=X\ \forall x\in X$, then $\mathscr{X}$ is called a \textit{minimal flow}. Let $\mathscr{X}$ be a compact flow; that is, $\mathscr{X}$ is a flow with compact Hausdorff phase space $X$. If $x\in X$ such that $\overline{Tx}$ is a minimal subset of $\mathscr{X}$, then $x$ is refereed to as an \textit{almost periodic} (a.p.) \textit{point} for $\mathscr{X}$.
The induced affine flow $T\curvearrowright\mathfrak{M}(X)$ and hyperflow $2^\mathscr{X}$ of a compact minimal flow $\mathscr{X}$ are not minimal unless $X=\{pt\}$ a singleton. So, the dynamics on $2^\mathscr{X}$ is more richer than that on $\mathscr{X}$. See, e.g., \cite{BS, GW, LYY, AAN, LOYZ, JO}.

Let $\mathscr{X}, \mathscr{Y}$ be two compact flows. Then $\phi\colon\mathscr{X}\rightarrow\mathscr{Y}$ is called an \textit{extension} if $\phi\colon X\rightarrow Y$ is a continuous surjection such that $\phi(tx)=t\phi(x)$ for all $t\in T$ and $x\in X$.
If $\phi\colon \mathscr{X}\rightarrow \mathscr{Y}$ is an extension of compact flows, then $2^\phi\colon 2^\mathscr{X}\rightarrow2^\mathscr{Y}$ is an extension of compact hyperflows\,(see \cite[Thm.~II.1.8]{Wo}).
\end{se}

\begin{lem}[{cf.~\cite[Lem.~3.12.15]{B79} or \cite[Thm.~I.1.4]{Wo}}]\label{6}
If $\phi\colon\mathscr{X}\rightarrow\mathscr{Y}$ is an extension of compact flows with $\mathscr{X}$ having a dense set of a.p. points and $\mathscr{Y}$ minimal, then $\phi$ is semi-open.
\end{lem}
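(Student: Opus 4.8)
=== PROOF PROPOSAL ===

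\medskip

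The plan is to prove that the extension $\phi$ is semi-open by showing that the preimage under $\phi$ of every dense subset of $Y$ is dense in $X$, and then invoke Lemma~\ref{2}(a). Since $X$ is compact Hausdorff, hence locally compact, Lemma~\ref{2} applies directly. So it suffices to fix a dense set $D\subseteq Y$ together with a nonempty open set $U\subseteq X$ and produce a point of $\phi^{-1}[D]\cap U$.

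\medskip

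First I would exploit the hypothesis that $\mathscr{X}$ has a dense set of almost periodic points: choose an a.p. point $x_0\in U$, so that $M:=\overline{Tx_0}$ is a minimal subset of $\mathscr{X}$ contained in $\overline{U}$. The key observation is that $\phi[M]$ is a nonempty closed $T$-invariant subset of $Y$; since $\mathscr{Y}$ is minimal, this forces $\phi[M]=Y$. Thus the restriction $\phi|_M\colon M\rightarrow Y$ is already a surjection of the minimal flow onto $\mathscr{Y}$. This is the step where both hypotheses are genuinely used, and it is where I expect the real content to sit.

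\medskip

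Next I would transport the density of $D$ through this surjection. Because $\phi|_M$ maps the compact minimal flow $M$ onto the minimal flow $Y$, and minimal maps of compact flows are known to be semi-open (indeed an extension onto a minimal flow whose domain is itself minimal is semi-open by the very statement being proved, so to avoid circularity I would instead argue directly), I would show that $\phi^{-1}[D]\cap M$ is dense in $M$. Concretely, pick any $y\in\phi[U\cap M]$ and use density of $D$ near $y$ together with almost periodicity: the orbit closure structure of a minimal flow lets one return orbit segments arbitrarily close to a prescribed starting point, so a point of $M$ lying over $D$ can be found inside $U$. Intersecting with the open set $U$ and using $x_0\in U\cap M$ then yields a point of $\phi^{-1}[D]\cap U$, completing the density argument.

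\medskip

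The main obstacle, and the point I would be most careful about, is the potential circularity flagged above: the cleanest formulation of ``minimal image is semi-open'' is essentially the conclusion of the lemma, so I must give a self-contained argument that $\phi|_M$ pulls dense sets back to dense sets. I expect the honest route is to use the recurrence in the minimal flow $M$ directly — namely, that for the a.p. point $x_0\in U$ the syndetic set of return times $\{t\in T\mid tx_0\in U\}$ lets us realize, for any target $y$ near $\phi(x_0)$, a preimage in $U$ — rather than quoting openness or semi-openness of $\phi|_M$ as a black box. Handling this return-time/minimality estimate carefully is the crux; the reduction to Lemma~\ref{2}(a) and the surjectivity $\phi[M]=Y$ are routine by comparison.
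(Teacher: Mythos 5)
The paper itself does not prove Lemma~\ref{6}; it is quoted from Bron\v ste\v in and van der Woude, so your attempt has to be judged on its own. Your opening reduction is the right one and is where the hypotheses enter: pick an a.p.\ point $x_0$ in the given open set $U$, let $M=\overline{Tx_0}$, and use minimality of $\mathscr{Y}$ to get $\phi[M]=Y$. But two things go wrong after that. First, the assertion that $M$ is ``contained in $\overline{U}$'' is false: $\overline{Tx_0}$ is the whole minimal set through $x_0$ (it could be all of $X$), not a subset of $\overline{U}$; all you know is $U\cap M\not=\emptyset$. If $M\subseteq\overline{U}$ were true the lemma would be nearly trivial, which it is not. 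Second, and more seriously, the step you yourself identify as the crux is not actually carried out. Routing the argument through Lemma~\ref{2}(a) buys nothing, since ``every dense $D\subseteq Y$ meets $\phi[U\cap M]$'' is literally equivalent to ``$\mathrm{int}\,\phi[U\cap M]\not=\emptyset$''; and the mechanism you propose --- density of $D$ near $\phi(x_0)$ combined with syndetic return times of $x_0$ to $U$ --- does not produce a point of $U\cap M$ lying over $D$. A point $d\in D$ close to $\phi(x_0)$ need not lie on the orbit $T\phi(x_0)$, and its fiber need not come anywhere near $x_0$; that failure of lower semicontinuity of $\phi^{-1}$ is exactly what the lemma is designed to get around, so it cannot be waved away by ``recurrence.''

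The missing idea is a finite-subcover-plus-Baire argument on $M$ rather than a return-time argument in $T$. By regularity choose an open $V$ with $x_0\in V\subseteq\overline{V}\subseteq U$. By minimality of $M$, every orbit in $M$ meets the relatively open set $V\cap M$, so $M=\bigcup_{t\in T}t^{-1}(V\cap M)$, and compactness of $M$ gives finitely many $t_1,\dotsc,t_n$ with $M=\bigcup_{i=1}^{n}t_i^{-1}(V\cap M)$. Applying $\phi$ and equivariance, $Y=\phi[M]=\bigcup_{i=1}^{n}t_i^{-1}\phi[V\cap M]$. Since $Y$ is compact Hausdorff, hence Baire, one of the finitely many closed sets $t_i^{-1}\overline{\phi[V\cap M]}$ has non-empty interior, hence so does $\overline{\phi[V\cap M]}$; and $\overline{\phi[V\cap M]}\subseteq\phi[\overline{V}]\subseteq\phi[U]$, so $\mathrm{int}\,\phi[U]\not=\emptyset$. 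Note that finiteness of the subcover is essential here --- a compact (but infinite) family of translates of a nowhere dense set can cover $Y$ --- which is why the syndeticity of $\{t\mid tx_0\in U\}$ alone cannot substitute for this step.
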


\begin{6A}
In Definition~\ref{5}, if $T$ is only a topological semigroup, then $\mathscr{X}$ or $T\curvearrowright X$ is refereed to as a \textit{semiflow}. In this case, $x\in X$ is called \textit{a.p.} for $\mathscr{X}$ iff $x\in \overline{Tx}$ and $\overline{Tx}$ is a minimal subset of $\mathscr{X}$. In fact, it is not known whether or not Lemma~\ref{6} is still true if $T$ is only a semigroup. See Theorem~\ref{6}C for a confirmative conditional case.
\end{6A}

\begin{6B}
Let $\phi\colon\mathscr{X}\rightarrow\mathscr{Y}$ be an extension of compact semiflows. We say that $\phi$ is \textit{highly proximal}\,(h.p.) iff, for all $y\in Y$, there is a net $t_n\in T$ with $t_n\phi^{-1}(y)\to\{pt\}$ in $2^X$ (cf.~\cite[p.104]{Wo}).
\end{6B}

\begin{6C}
Let $\phi\colon\mathscr{X}\rightarrow\mathscr{Y}$ be an extension of compact semiflows such that $t\phi^{-1}(y)=\phi^{-1}(ty)$ for all $t\in T, y\in Y$. If $\phi$ is h.p. and $\mathscr{X}$ has a dense set of a.p. points, then $\phi$ is semi-open.
\end{6C}

\begin{proof}
Firstly we claim that every nonempty open subset of $X$ contains a $\phi$-fiber. Indeed, let $U\subset X$ be open with $U\not=\emptyset$. Then there is an a.p. point $x_0\in U$. Since $\phi$ is h.p., there is a net $t_n\in T$ such that $t_n\phi^{-1}\phi(x_0)\to\{x^\prime\}$, with $x^\prime=\lim t_nx_0$, in $2^X$. As $x_0$ is a.p., it follows that there is a net $s_j\in T$ with $s_jx^\prime\to x_0$. Thus, there is a net $\tau_i\in T$ such that $\tau_i\phi^{-1}\phi(x_0)\to L\subseteq U$, and so that $\phi^{-1}\phi(\tau_ix_0)\subseteq U$ eventually.

Now to prove that $\phi$ is semi-open, let $U\subset X$ be open with $U\not=\emptyset$. By our claim above, there exists a point $y_0\in Y$ such that $\phi^{-1}(y_0)\subseteq U$. Since $\phi_\textrm{ad}|_Y\colon Y\rightarrow 2^X$, $y\mapsto\phi^{-1}(y)$ is upper semi-continuous and $U$ is open, there is an open neighborhood $V$ of $y_0$ in $Y$ such that $\phi^{-1}[V]\subseteq U$. Thus, $\phi[U]\supseteq V$ so that $\textrm{int}\,\phi[U]\not=\emptyset$. The proof is completed.
\end{proof}

Therefore, if $\phi\colon\mathscr{X}\rightarrow\mathscr{Y}$ is an h.p. extension of compact flows with $\mathscr{X}$ having a dense set of a.p. points, then $\phi$ is semi-open. A point of Theorem~\ref{6}C is that $\mathscr{Y}$ need be minimal here.

As was mentioned before, $2^\mathscr{X}$ need not have a dense set of a.p.~points and $2^\mathscr{Y}$ are generally not minimal, so Lemma~\ref{6} is not applicable straightforwardly for $2^\phi\colon 2^\mathscr{X}\rightarrow2^\mathscr{Y}$. However, using Theorem~\ref{4} we can obtain the following:

\begin{cor}\label{7}
Let $\phi\colon\mathscr{X}\rightarrow\mathscr{Y}$ be an extension of compact minimal flows. Then:
\begin{enumerate}[(1)]
\item $2^\phi\colon 2^{\mathscr{X}}\rightarrow2^{\mathscr{Y}}$ and $2^{2^\phi}\colon2^{2^{\mathscr{X}}}\rightarrow2^{2^{\mathscr{Y}}}$ both are semi-open extensions of compact hyperflows.
\item If $X$ and $Y$ both are compact metric spaces, then
\begin{enumerate}
\item[] $\phi_*\colon\mathfrak{M}(X)\rightarrow\mathfrak{M}(Y)$\quad and\quad $(\phi_*)_*\colon T\curvearrowright\mathfrak{M}(\mathfrak{M}(X))\rightarrow T\curvearrowright\mathfrak{M}(\mathfrak{M}(Y))$
    \end{enumerate}
    both are semi-open extensions of compact flows.
\end{enumerate}
\end{cor}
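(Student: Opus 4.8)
The plan is to reduce everything to a single semi-openness fact about the base extension $\phi$ and then to propagate it upward through the purely topological equivalences already established, thereby sidestepping the obstruction noted just before the statement (namely that $2^\mathscr{X}$ need not have a dense set of a.p.\ points and $2^\mathscr{Y}$ is generally non-minimal). First I would check that $\phi\colon\mathscr{X}\rightarrow\mathscr{Y}$ is itself semi-open: since $\mathscr{X}$ is minimal, every point of $X$ is almost periodic, so $\mathscr{X}$ trivially has a dense set of a.p.\ points, and $\mathscr{Y}$ is minimal by hypothesis; hence Lemma~\ref{6} applies and gives that $\phi$ is semi-open. This is the one and only place where the dynamical hypotheses are used.

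For part~(1), I would first recall from Definition~\ref{5} that, $\phi$ being an extension of compact flows, the induced map $2^\phi\colon 2^\mathscr{X}\rightarrow 2^\mathscr{Y}$ is an extension of compact hyperflows, and that iterating this construction makes $2^{2^\phi}$ likewise an extension of compact hyperflows. Since $\phi$ is semi-open and $X,Y$ are compact Hausdorff, Theorem~\ref{4} yields that $2^\phi$ is semi-open. Then I would view $2^\phi\colon 2^X\rightarrow 2^Y$ as a continuous surjection between the compact Hausdorff spaces $2^X$ and $2^Y$ and apply Theorem~\ref{4} a second time, now to this map, to conclude that $2^{2^\phi}$ is semi-open. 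This completes part~(1).

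For part~(2), I would assume in addition that $X$ and $Y$ are compact metric, so that $\mathfrak{M}(X)$ and $\mathfrak{M}(Y)$ are again compact metric; the maps $\phi_*$ and $(\phi_*)_*$ are equivariant continuous surjections of the induced affine flows, hence extensions of compact flows. Because $\phi$ is semi-open and $X,Y$ are compact metric, Theorem~B gives that $\phi_*$ is semi-open. Finally, regarding $\phi_*\colon\mathfrak{M}(X)\rightarrow\mathfrak{M}(Y)$ as a continuous surjection of the compact metric spaces $\mathfrak{M}(X)$ and $\mathfrak{M}(Y)$ and applying Theorem~B once more, I would conclude that $(\phi_*)_*$ is semi-open, which finishes part~(2).

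The hard part is conceptual rather than computational: one cannot invoke Lemma~\ref{6} directly for $2^\phi$ or $\phi_*$, precisely because the required dynamical structure (dense a.p.\ points upstairs, minimality downstairs) is destroyed on passing to hyperspaces or measure spaces. The crux is therefore to secure semi-openness at the base level and then transport it with the dynamics-free equivalences of Theorem~\ref{4} and Theorem~B, each applied twice. The remaining points — that $2^\phi$, $2^{2^\phi}$, $\phi_*$ and $(\phi_*)_*$ are genuinely equivariant continuous surjections of the appropriate compact flows, and that the relevant hyperspaces and measure spaces stay compact (metric) — are routine and follow from Definition~\ref{5} together with the compactness and metrizability transfer for $2^X$ and $\mathfrak{M}(X)$.
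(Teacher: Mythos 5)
Your proposal is correct and follows exactly the paper's (very terse) proof: establish semi-openness of $\phi$ via Lemma~\ref{6} (minimality of $\mathscr{X}$ giving a dense set of a.p.\ points), then apply Theorem~\ref{4} twice for part~(1) and Theorem~B$^\prime$ (the direction of Theorem~B you actually use) twice for part~(2). The supporting observations about equivariance and the preservation of compactness/metrizability under $2^{(\cdot)}$ and $\mathfrak{M}(\cdot)$ are exactly the routine ingredients the paper leaves implicit.
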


\begin{proof}
By Lemma~\ref{6}, Theorem~\ref{4} and Theorem~B$^\prime$.
\end{proof}

\begin{cor}\label{8}
Let $\phi\colon\mathscr{X}\rightarrow\mathscr{Y}$ be an extension of compact flows. Then
$\phi$ is open iff $2^\phi\colon2^\mathscr{X}\rightarrow2^\mathscr{Y}$ is open iff $2^{2^\phi}\colon2^{2^\mathscr{X}}\rightarrow2^{2^\mathscr{Y}}$ is open.
\end{cor}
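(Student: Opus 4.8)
The plan is to reduce everything to Theorem~\ref{3}, which already supplies the equivalence ``$g$ is open $\iff$ $2^g$ is open'' for an arbitrary continuous surjection $g$ between compact Hausdorff spaces. The flow structure on $\mathscr{X}$ and $\mathscr{Y}$ plays no role whatsoever, since openness is a purely topological property of the underlying maps $\phi\colon X\to Y$, $2^\phi\colon 2^X\to 2^Y$ and $2^{2^\phi}\colon 2^{2^X}\to 2^{2^Y}$. So I would simply check that Theorem~\ref{3} is applicable at each level and then chain the two resulting equivalences.

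First I would apply Theorem~\ref{3} directly to $\phi\colon X\to Y$, which is a continuous surjection between compact Hausdorff spaces, to obtain
\[
\phi \text{ open} \iff 2^\phi \text{ open}.
\]
Next I would observe that $2^X$ and $2^Y$ are themselves compact Hausdorff (recorded in the excerpt: ``$X$ is a compact Hausdorff space iff so is $2^X$'') and that $2^\phi\colon 2^X\to 2^Y$ is again a continuous surjection (as noted in \ref{1}). Hence Theorem~\ref{3} applies a second time, now with $2^\phi$ playing the role of $f$, giving
\[
2^\phi \text{ open} \iff 2^{(2^\phi)} \text{ open}.
\]
Since $2^{(2^\phi)}$ is precisely the induced hyperspace map $2^{2^\phi}\colon 2^{2^X}\to 2^{2^Y}$, concatenating the two displayed equivalences yields the asserted three-way equivalence.

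There is essentially no hard step: the entire content is an iterated invocation of Theorem~\ref{3}, in the same spirit as the short proof of Corollary~\ref{7}. The only point requiring a moment's care is the bookkeeping at the second level, namely verifying that the hypotheses of Theorem~\ref{3} genuinely transfer upward --- that $2^X,2^Y$ remain compact Hausdorff and that $2^\phi$ is a continuous surjection, so that $2^\phi$ legitimately qualifies as an ``$f$'' in the statement of Theorem~\ref{3}, and that its associated hyperspace map is indeed $2^{2^\phi}$. All of these facts are already available in the excerpt (the compactness and Hausdorffness of hyperspaces, and the surjectivity of induced maps), so once they are cited the proof is immediate.
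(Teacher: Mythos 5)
Your proposal is correct and coincides with the paper's intended argument: the paper's entire proof is ``By Theorem~3,'' meaning precisely the two-fold application you describe (to $\phi$ and then to $2^\phi$, after noting that $2^X$, $2^Y$ are compact Hausdorff and $2^\phi$ is a continuous surjection). Your version just spells out the bookkeeping the paper leaves implicit.
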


\begin{proof}
By Theorem~3.
\end{proof}

\begin{se}[Irreducibility of maps]\label{9}
In what follows, let $\phi\colon X\rightarrow Y$ be a continuous surjection between compact Hausdorff spaces. We say that $\phi$ is \textit{irreducible} if the only member $A\in 2^X$ with $\phi[A]=Y$ is $X$ itself. This notion is closely related to ``highly proximal'' in extensions of minimal flows~\cite{Wo}.

\begin{lem9A}
$\phi$ is irreducible iff every non-empty open subset $U$ of $X$ contains a fiber $\phi^{-1}(y)$ for some point $y\in Y$.
\end{lem9A}

\begin{proof}
It is straightforward.
\end{proof}

\begin{thm9B}
$\phi$ is irreducible iff $2^\phi\colon2^X\rightarrow2^Y$ is irreducible.
\end{thm9B}

\begin{proof}
\item \textsl{Sufficiency:} Assume $2^\phi$ is irreducible. Let $A\in 2^X$ with $\phi[A]=Y$. Then $2^A\subseteq 2^X$ is a closed set such that $2^\phi[2^A]=2^Y$. Thus, $2^A=2^X$ and $A=X$. This shows that $\phi$ is irreducible.

\item \textsl{Necessity:} Suppose $\phi$ is irreducible. Let $\langle U_1,\dotsc,U_n\rangle$ be a basic open set in $2^X$. In view of Lemma~\ref{9}A, it is sufficient to prove that $\langle U_1,\dotsc,U_n\rangle$ includes a fiber of $2^\phi$. Indeed, as $\phi$ is irreducible, it follows by Lemma~\ref{9}A that there is a point $y_i\in\phi[U_i]$ with $\phi^{-1}(y_i)\subseteq U_i$ for all $i=1,\dotsc,n$. Let $B=\{y_1,\dotsc,y_n\}\in 2^Y$. Then
    $(2^\phi)^{-1}(B)\subseteq\langle U_1,\dotsc,U_n\rangle$.
    Thus, $2^\phi$ is irreducible. The proof is completed.
\end{proof}
\end{se}

\begin{se}\label{10}
Let $f\colon X\rightarrow Y$ be a semi-open continuous surjection of compact Hausdorff spaces, where $X$ is not metrizable. In view of Theorem~B$^{\prime\prime}$ we naturally wonder whether or not $f_*\colon\mathfrak{M}(X)\rightarrow\mathfrak{M}(Y)$ is semi-open.
The following lemma seems to be helpful for the this question, which is of interest independently.

\begin{10A}
Let $f\colon X\rightarrow Y$ be a continuous surjection between compact Hausdorff spaces, where $X$ is not necessarily metrizable. Let $\psi\colon X\rightarrow\mathbb{R}$ be a continuous function and define functions
\begin{enumerate}
\item[] $\psi^*\colon Y\xrightarrow{y\mapsto\psi^*(y)=\sup_{x\in f^{-1}(y)} \psi(x)}\mathbb{R}$\quad and\quad
$\psi_*\colon Y\xrightarrow{y\mapsto\psi_*(y)=\inf_{x\in f^{-1}(y)} \psi(x)}\mathbb{R}$.
\end{enumerate}
Then there is a residual set $Y_c(\psi)\subseteq Y$ such that $\psi^*$ and $\psi_*$ are continuous at each point of $Y_c(\psi)$.
\end{10A}

\begin{proof}
Let $\rho$ be a continuous pseudo-metric on $X$ and let $\tilde{\rho}$ be the naturally induced one on $2^X$. Since $f^{-1}\colon Y\rightarrow2^X$, defined by $y\mapsto f^{-1}y$, is upper semi-continuous, hence $f^{-1}\colon Y\rightarrow(2^X,\tilde{\rho})$ is also upper semi-continuous. Thus, there exists a residual set $Y_\rho\subseteq Y$ such that $f^{-1}$ is continuous at every point of $Y_\rho$.
Now, for every $\varepsilon>0$, there exists a continuous pseudo-metric $\rho$ on $X$ and a positive $r>0$ such that if $x,x^\prime\in X$ with $\rho(x,x^\prime)<r$, then $|\psi(x)-\psi(x^\prime)|<\varepsilon/3$. Then there exists a residual set $Y_\varepsilon=Y_\rho\subseteq Y$ such that for every $y\in Y_\varepsilon$, we have that $|\psi^*(y)-\psi^*(y^\prime)|+|\psi_*(y)-\psi_*(y^\prime)|<\varepsilon$ as $y^\prime\in Y$ close sufficiently to $y$.
Let $Y_c=\bigcap_{n=1}^\infty Y_{1/n}$. Clearly, $Y_c$ is a residual subset of $Y$ as desired. The proof is completed.
\end{proof}

\begin{10B}[Densely open mappings]
Let $f\colon X\rightarrow Y$ be a continuous surjection between compact Hausdorff spaces. We say that $f$ is \textit{densely open} if there exists a dense set $Y_o\subseteq Y$ such that $f^{-1}\colon Y\rightarrow 2^X$ is continuous at each point of $Y_o$. For example, if $X$ is a separable metric space, then $f$ is always densely open.
We notice here that the ``densely open'' is different with ``almost open'' considered in \cite[Def.~1.5]{HZZ} and \cite{AC}.
\end{10B}

\begin{10C}
Let $f\colon X\rightarrow Y$ be a continuous surjection between compact Hausdorff spaces. Then:
\begin{enumerate}[(1)]
\item If $f$ is semi-open densely open, then $f_*$ is semi-open densely open.
\item If $f$ is densely open, then $f_*$ is densely open.
\end{enumerate}
\end{10C}

\begin{proof}
Based on Def.~\ref{10}B, let $Y_o$ be the dense set of points of $Y$ at which the set-valued map $f^{-1}\colon Y\rightarrow2^X$ is continuous. Let $\delta_Y=\{\delta_y\,|\,y\in Y\}$.

\item (1):  As $f$ is semi-open, it follows from Lemma~\ref{2} that $X_o:=f^{-1}[Y_o]$ is dense in $X$.
In view of (2), we need only prove that $f_*$ is semi-open.
For that let $\mathcal{U}\subset\mathfrak{M}(X)$ be a closed set with non-empty interior. We need only show that $\mathrm{int}\,f_*[\mathcal{U}]\not=\emptyset$. Suppose to the contrary that $\mathrm{int}\,f_*[\mathcal{U}]=\emptyset$. We now fix a measure
    \begin{enumerate}
    \item[] $\mu_0=\sum_{i=1}^mc_i\delta_{x_i}\in\mathrm{int}\,\mathcal{U}$ with $x_i\in X_o$, $0<c_i\le1$, $\sum_ic_i=1$.
    \end{enumerate}
    Set
    \begin{enumerate}
    \item[] $\nu_0=f_*(\mu_0)=\sum_{i=1}^mc_i\delta_{y_i}$ where $y_i=f(x_i)\in Y_o$.
    \end{enumerate}
    Let
    \begin{enumerate}
    \item[] $\nu_j=\sum_{i=1}^{m_j} c_{j,i}\delta_{y_{j,i}}\in\mathrm{co}(\delta_Y)\setminus f_*[\mathcal{U}]$ be a net with $\nu_j\to \nu_0$.
    \end{enumerate}
    So $\nu_j\to\nu_0$.
Each $Q_j$ is a closed convex subset of $\mathfrak{M}(X)$ and, with no loss of generality, we may assume that $Q=\lim Q_j$ exists in $2^{\mathfrak{M}(X)}$. Then $Q$ is a compact convex subset of $\mathfrak{M}(X)$ with $f_*[Q]=\{\nu_0\}$.

If $\mu_0\in Q$, then $Q_j\cap\mathcal{U}\not=\emptyset$ eventually so that $\nu_j\in f_*[\mathcal{U}]$, contradicting our choice of $\nu_j$. Thus, $\mu_0\notin Q$ and by the Separation Theorem there exists a $\psi\in C(X)$ and $\epsilon>0$ such that
    \begin{enumerate}
    \item[] $\mu_0(\psi)\ge q(\psi)+\epsilon\ \forall q\in Q$.
    \end{enumerate}
Define the associated function
\begin{enumerate}
\item[] $\psi^*\colon Y\rightarrow\mathbb{R}$, by $y\mapsto\psi(y)=\sup\{\psi(x)\,|\,x\in f^{-1}y\}$, such that $\psi^*\circ f\ge\psi$.
\end{enumerate}
We can choose points $x_{j,i}\in X$ with $f(x_{j,i})=y_{j,i}$ and $\psi^*(y_{j,i})=\psi(x_{j,i})$. Now form the measures
$\mu_j=\sum\limits_{i=1}^{m_j}c_{j,i}\delta_{x_{j,i}}$
and assume, with no loss of generality, that
$\mu=\lim_j\mu_j$
exists in $\mathfrak{M}(X)$. Since $\mu_j\in Q_j$ for each $j$, we have $\mu\in Q$. Thus,
$\mu_0(\psi)\ge \mu(\psi)+\epsilon$.
By our construction $\nu_j(\psi^*)=\mu_j(\psi)$ for every $j$. By assumption we have that $\textrm{supp}(\nu_0)=\{y_1,\dotsc,y_m\}$ is a subset of $Y_o$ and therefore, each $y_i$ is a continuity point of $\psi^*$. Thus, $\lim\nu_j(\psi^*)=\nu_0(\psi^*)$. It then follows that
    $$
    \mu(\psi)=\lim_j\mu_j(\psi)=\lim_j\nu_j(\psi^*)=\nu_0(\psi^*)=\sum_{i=1}^mc_i\psi^*(y_i)\ge\sum_{i=1}^mc_i\psi(x_i)=\mu_0(\psi).
    $$
    This contradicts the choice of $\psi$. Therefore $f_*$ is semi-open.

\item (2): We will now prove that $f_*$ is densely open. Suppose to the contrary that $f_*$ is not densely open. Then there exists an open set $\mathcal {V}\not=\emptyset$ in $\mathfrak{M}(Y)$ such that $f_*^{-1}\colon\mathfrak{M}(Y)\rightarrow2^{\mathfrak{M}(X)}$ is not continuous at every point of $\mathcal {V}$. Since $\mathrm{co}(\delta_{Y_o})$ is dense in $\mathfrak{M}(Y)$, there exists a measure of the form
    \begin{enumerate}
    \item[] $\nu_o=\sum_{i=1}^mc_i\delta_{y_i}$, where $y_i\in Y_o$ and $0\le c_i\le 1$ with $\sum c_i=1$,
    \end{enumerate}
    such that $\nu_0\in\mathcal {V}$. This implies that there is a measure
    \begin{enumerate}
    \item[] $\mu_o\in f_*^{-1}(\nu_o)$ such that there is $\mathcal {U}\in\mathfrak{N}_{\mu_o}(\mathfrak{M}(X))$ with $f_*[\mathcal {U}]\notin\mathfrak{N}_{\nu_o}(\mathfrak{M}(Y))$.
    \end{enumerate}
Then we can select a net $\nu_j=\sum_{i=1}^{m_j} c_{j,i}\delta_{y_{j,i}}\in\mathrm{co}(\delta_{Y_o})\setminus f_*[\mathcal{U}]$ such that $\nu_j\to\nu_o$. Next by an argument as in (1) we can reach a contradiction.
The proof is completed.
\end{proof}
\end{se}

\section*{Acknowledgements}
This work was supported by National Natural Science Foundation of China (Grant No. 12271245) and PAPD of Jiangsu Higher Education Institutions.



\begin{thebibliography}{10}
\bibitem{AAN}
\newblock {E.~Akin, J.~Auslander and A.~Nagar},
\newblock {\it Dynamics of induced systems},
\newblock {Ergod. Th. $\&$ Dynam. Sys., 37 (2017), 2034-2059}.

\bibitem{AC}
\newblock {A.~Andrade and J.~Camargo},
\newblock {\it A note on semi-open and almost open induced maps},
\newblock {Topology Appl., 302 (2021), No. 107823, 6 pp}.

\bibitem{BS}
\newblock {W. Bauer and K. Sigmund},
\newblock {\it Topological dynamics of transformations induced on space of probability measures},
\newblock {Monatsh. Math., 79 (1975), 81--92}.


\bibitem{B79}
\newblock I.\,U.~Bron\v ste\v in,
\newblock {\it Extensions of Minimal Transformation Groups},
\newblock {Sijthoff $\&$ Noordhoff, 1975/79}.

\bibitem{C}
\newblock {J.\,J.~Charatonik},
\newblock {\it Recent research in hyperspace theory},
\newblock {Extracta Math., 18 (2003), 235-262}.

\bibitem{CIM}
\newblock {J.\,J.~Charatonik, A.~Illanes and S.~Mac\'{\i}as},
\newblock {\it Induced mappings on hyperspace $C_n(X)$ of a continuum $X$},
\newblock {Houston J. Math., 28 (2002), 781-805}.

\bibitem{DE}
\newblock {S.\,Z. Ditor and L.\,Q. Eifler},
\newblock {\it Some open mapping theorems for measures},
\newblock {Trans. Amer. Math. Soc., 164 (1972), 287-293}.

\bibitem{G07}
\newblock {E. Glasner},
\newblock {\it The structure of tame minimal dynamical systems},
\newblock {Ergod. Th. $\&$ Dynam. Sys., 27 (2007), 1819-1837}.

\bibitem{G19}
\newblock {E. Glasner},
\newblock {\it The structure of tame minimal dynamical systems for general groups},
\newblock {Invent. Math., 211 (2018), 213--244}.

\bibitem{GW}
\newblock {E. Glasner and B.~Weiss},
\newblock {\it Quasi-factors of zero-entropy systems},
\newblock {J. Amer. Math. Soc., 8 (1995), 655-686}.

\bibitem{H97}
\newblock {H. Hosokawa},
\newblock {\it Induced mappings on hyperspaces},
\newblock {Tsukuba J. Math., 21 (1997), 239--250}.

\bibitem{HZZ}
\newblock {X.~Huang, F.~Zeng and G.~Zhang},
\newblock {\it Semi-openness and almost-openness of induced mappings},
\newblock {Appl. Math. J. Chinese Univ. Ser. B, 20 (2005), 21--26}.

\bibitem{IN}
\newblock {A. Illanes and S.\,B. Nadler, Jr.},
\newblock {\it Hyperspaces, Fundamentals and Recent Advances},
\newblock {New York, Marcel Dekker Inc, 1999}.

\bibitem{JO}
\newblock {D.~Jeli\'{c} and P.~Oprocha},
\newblock {\it On recurrence and entropy in the hyperspace of continua in dimension one},
\newblock {Fund. Math., 263 (2023), 23--50}.

\bibitem{K42}
\newblock {J.\,L. Kelley},
\newblock {\it Hyperspaces of a continuum},
\newblock {Trans. Amer. Math. Soc., 52 (1942), 22--36}.

\bibitem{Ko} 
\newblock {S.\,C. Koo},
\newblock {\it Recursive properties of transformations groups in hyperspaces},
\newblock {Math. Systems Theory, 9 (1975), 75--82}.

\bibitem{LOYZ}
\newblock {J.~Li, P.~Oprocha, X.~Ye and R.~Zhang},
\newblock {\it When are all closed subsets recurrent?},
\newblock {Ergod. Th. $\&$ Dynam. Sys., 37 (2017), 2223-2254}.

\bibitem{LYY}
\newblock {J.~Li, K.~Yan and X.~Ye},
\newblock {\it Recurrence properties and disjointness on the induced spaces},
\newblock {Discrete Contin. Dyn. Syst., 35 (2015), 1059-1073}.


\bibitem{Mi}
\newblock {E. Michael},
\newblock {\it Topologies on spaces of subsets},
\newblock {Trans. Amer. Math. Soc., 71 (1951), 152--182}.

\bibitem{Wo}
\newblock J.\,C.\,S.\,P. van der~Woude,
\newblock \textit{Topological Dynamix},
\newblock CWI Tracts No. 22, Centre for Mathematics and Computer Science (CWI), Amsterdan, 1986.

\bibitem{V77}
  \newblock {W.\,A.~Veech},
  \newblock {\it Topological dynamics},
  \newblock {Bull. Amer. Math. Soc., 83 (1977), 775--830}.
\end{thebibliography}
\end{document}